\documentclass[11pt]{amsart}
\usepackage{graphicx}
\usepackage{amsmath}
\usepackage{enumerate}
\usepackage[top=15mm, bottom=20mm, left=30mm, right=30mm]{geometry}
\newtheorem{theorem}{Theorem}[section]
\newtheorem{lemma}[theorem]{Lemma}
\newtheorem{proposition}[theorem]{Proposition}
\newtheorem{corollary}[theorem]{Corollary}
\newtheorem{definition}[theorem]{Definition}
\newtheorem{example}[theorem]{Example}
\newtheorem*{remark}{Remark}
\newtheorem*{note}{Note}

\newcommand{\Qbar}{\overline{Q}}
\newcommand{\Qstar}{Q^*}
\newcommand{\qstar}{q^*}

\newcommand{\sval}{\psi}
\newcommand{\games}{\mathcal{G}}
\newcommand{\sgames}{\mathcal{SG}}
\newcommand{\fagames}{\mathcal{AG}}

\newcommand{\funs}{\mathcal{F}}

\newcommand{\bij}{\Leftrightarrow}

\newcommand{\csval}{collective semivalue}

\newcommand{\noproof}{\hfill\qed}

\begin{document}
\title{Power measures derived from the sequential query process}
\author{Geoffrey Pritchard}
\author{Reyhaneh Reyhani}
\author{Mark C. Wilson}       
\date{\today}

\keywords{winning coalitions, query, Shapley-Shubik, semivalue} 
\subjclass[2000]{91B12}

\begin{abstract}
We study a basic sequential model for the formation of winning
coalitions in a simple game, well known from its use in defining the
Shapley-Shubik power index. We derive in a uniform way a family of
measures of collective and individual decisiveness in simple games, and
show that, as for the Shapley-Shubik index, they extend naturally to
measures for TU-games. These individual measures, which we call weighted
semivalues, form a class whose intersection with that of the class of
weak semivalues yields the class of all semivalues.

We single out the simplest measure in this family for more investigation,
as it is new to the literature as far as we know. Although it is very
different from the Shapley value, it is closely related in several ways,
and is the natural analogue of the Shapley value under a nonstandard,
but natural, definition of simple game. We illustrate this new measure
by calculating its values on some standard examples.
\end{abstract}

\maketitle

\section{Introduction}
\label{s:intro}

Many authors have discussed the value theory of cooperative TU-games and
its counterpart for simple games, the theory of power measures. 
The material in the present paper arises from a generalization of a particular measure of manipulability of voting rules (below called $\Qbar$) circulated in preprint form by the 
present authors. We realized that our original arguments generalize greatly, and yield a general
construction for TU-games that leads directly to a large class of allocations
including all semivalues. Amongst these, the simplest one yields 
a semivalue with several attractive properties.

\subsection{Our contribution}
\label{ss:contributions} 

We explore the (Shapley-Shubik) sequential model for the formation of
winning coalitions in a simple game, and define
(Section~\ref{s:collective} and~\ref{s:individual}) a family of
decisiveness measures $\Qstar_F$ and individual power measures
$\qstar_F$ for simple games. These measures satisfy many desirable
properties, and extend naturally to the class of TU-games. It turns out
that this construction generates the class of what we call weighted
semivalues. We give several interpretations of these values in
Section~\ref{s:interp}. In this framework the simplest $F$ is affine and
yields a weighted semivalue we call $\qstar_0$, which we single out for
further attention. Although it differs substantially from the
Shapley-Shubik index, it is closely related as we see in
Section~\ref{ss:another}. We illustrate the new measures by applying
them to some well-known games, including games derived from the study of
strategic manipulation of voting rules, our original motivating
examples.

\subsection{Basic definitions}
\label{ss:background} 

The definitions of simple game and TU-game are not entirely standardized.
We use the most common definitions found in the literature. However, in 
Section~\ref{ss:another}, we shall drop some of the assumptions made here.

A \emph{TU-game} on a finite set $X$ is defined by its
\emph{characteristic function} $v:2^X \to \mathbb{R}$, such that 
$v(\emptyset) = 0$. We denote the class of 
all TU-games on $X$ by $\games(X)$. The class of finitely additive TU-games 
consists of those games satisfying $v(S \cup T) + v(S\cap T) = v(S) + v(T)$ 
for all $S,T\subseteq X$, and is denoted $\fagames(X)$. A TU-game is 
\emph{monotonic} if $A\subseteq B \subseteq X$ implies $v(A) \leq v(B)$.

An \emph{allocation} on $X$ is a function $\sval:\games(X) \to \fagames(X)$.
\begin{remark}
An allocation is often called a \emph{value assignment} or simply \emph{value}. An additive game is 
completely specified by the value of $v$ on singletons, so an allocation is just a way of associating 
a nonnegative real number with each player (we prefer not to use a vector, in order to avoid choosing 
an arbitrary ordering on players).

\end{remark}

A \emph{simple game} \cite{TaZw1999} $G = (X, W)$ on a finite set $X$ is
defined by a collection $W$ of subsets of $X$ (called \emph{winning
coalitions}), such that $\emptyset \not\in W$. Equivalently, it is a
TU-game on $X$ where $v$ takes only the values $0$ and $1$ (the value
$1$ corresponding to the property ``winning", whilst coalitions with
value $0$ are called \emph{losing}). Note that we do not require that the game
be nonempty --- that is, we may have $W = \emptyset$. The class of all
simple games on $X$ is denoted $\sgames(X)$ and we define $\sgames$
analogously to $\games$. A special class of game is the (weak) \emph{unanimity game} $\mathcal{U}_S$ 
defined by $S$, where a coalition is winning if and only if it contains $S$. When $|S| = 1$ this is 
called a \emph{dictatorial game}.


\section{The random query process}
\label{s:query}

Let $G = (X,W) \in \sgames$.
Consider the following stochastic process. We choose elements of $X$
sequentially without repetition, at each step choosing uniformly from
the set of elements not yet chosen, until the set of elements seen so
far first becomes a winning coalition. This is the same process
considered by Shapley and Shubik \cite{ShSh1954} in defining their power index
(see Section~\ref{ss:sequential} for more details). 

We first consider the  random variable equal to the number of  queries required.

\begin{definition}
\label{def:Q}
Let $V_1,\ldots, V_n$ be elements sampled without replacement from $X$, where $n=|X|$.
Equivalently, $\pi:= (V_1,\ldots, V_n)$ is a uniformly random
permutation of $X$, representing the order in which elements are to be
chosen. Let
$$ 
Q_\pi(G) = \min \{k: \{V_1,\ldots,V_k\}\text{ contains a winning coalition} \} .
$$
\end{definition}
\begin{remark}
If the game is empty we will not find a winning coalition. In this case
we define $Q_\pi(G)$ to have the value $n+1$. If the game is monotone, 
in Definition~\ref{def:Q} the word ``contains" can be replaced by ``is".
\end{remark}

\begin{definition}
\label{def:Qbar}
The quantity $\Qbar(G)$ is defined to be the expectation of $Q(G)$ with
respect to the uniform distribution on permutations of $X$. 
\end{definition}

\subsection{Non-sequential interpretation}
\label{ss:random subsets}

The sequential nature of the process  is only apparent, once we have
averaged over all possible orders. Thus we ought to be able to find a
representation of $\Qbar(G)$ that does not mention order of players. In order to do 
this, we assume from now on that the game is monotone.

\begin{definition}
\label{def:prob-k}
For each natural number $k$, define the probability measure $m_k$ to be
the uniform measure on the set of all subsets of $X$ of size $k$.
Thus each subset of $X$ of size $k$ is equally likely to be chosen, with
probability $\binom{n}{k}^{-1}$.
\end{definition}

For each natural number $k$, we let $W_k$ (respectively, $L_k$) denote
the set of all winning (respectively, losing) coalitions of size $k$.

\begin{lemma}
\label{lem:Qlem}
For each $k$ with $0\leq k \leq n$,
$$
\Pr(Q(G)\leq k) = \Pr (W_k)
$$
where the latter probability is with respect to $m_k$.

In other words, the probability that we require at most $k$ queries to
find a winning coalition equals the probability that a randomly chosen
$k$-subset is a winning coalition. 
\end{lemma}

\begin{proof}
The event $Q(G)\leq k$ means precisely that the initial subset $A(Q(G),k)$
formed by the first $k$ queries contains a winning subset. Each subset
of $X$ of size $k$ occurs with equal probability $\binom{n}{k}^{-1}$ as
an initial subset of queries of the query sequence, so that $A(Q(G),k)$ is
distributed as a uniform random sample from $X_k$.
\end{proof}

\begin{remark}
The cumulative distribution function of $Q(G)$ can thus be computed
by simply counting the number of winning coalitions of each fixed size.
\end{remark}

We can now derive a simple explicit formula for $\Qbar(G)$.

\begin{lemma}
\label{lem:Qbar}
$$\Qbar(G) = n+1 - \sum_{k=0}^{n} \frac{|W_k|}{\binom{n}{k}}.$$
\end{lemma}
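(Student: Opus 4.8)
The plan is to treat $\Qbar(G) = E[Q(G)]$ as the expectation of a nonnegative integer-valued random variable and apply the standard tail-sum (layer-cake) identity, then substitute the cumulative distribution function already computed in Lemma~\ref{lem:Qlem}. Since $Q(G)$ takes values in $\{1, 2, \ldots, n+1\}$ (recall the convention that $Q_\pi(G) = n+1$ when the game is empty), I would begin by writing
$$\Qbar(G) = E[Q(G)] = \sum_{k \geq 0} \Pr(Q(G) > k).$$

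Next I would truncate the range. Because $Q(G) \leq n+1$ with probability one, the summand $\Pr(Q(G) > k)$ vanishes for every $k \geq n+1$, so only the terms $k = 0, 1, \ldots, n$ survive. Rewriting each surviving term as $\Pr(Q(G) > k) = 1 - \Pr(Q(G) \leq k)$ and invoking Lemma~\ref{lem:Qlem}, which identifies $\Pr(Q(G) \leq k)$ with $\Pr(W_k) = |W_k|/\binom{n}{k}$, I obtain
$$\Qbar(G) = \sum_{k=0}^{n} \left(1 - \frac{|W_k|}{\binom{n}{k}}\right) = (n+1) - \sum_{k=0}^{n} \frac{|W_k|}{\binom{n}{k}},$$
which is the claimed formula.

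The computation is short, so the only point requiring care is the bookkeeping at the two ends of the range, and I would verify these as a sanity check rather than as a genuine obstacle. At $k = 0$ the term is consistent because $\emptyset \notin W$ forces $|W_0| = 0$, matching $\Pr(Q(G) \leq 0) = 0$; at $k = n$, monotonicity makes $|W_n| = 1$ precisely when $X$ is winning, in which case $\Pr(Q(G) \leq n) = 1$ and the process terminates by step $n$, whereas for the empty game $|W_n| = 0$ and the extra unit in $\Pr(Q(G) > n) = 1$ correctly accounts for the value $n+1$. The main conceptual ingredient is entirely supplied by Lemma~\ref{lem:Qlem}; once the CDF is known, the expectation formula is a one-line consequence of the tail-sum identity.
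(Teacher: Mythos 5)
Your proof is correct and follows essentially the same route as the paper: both use the tail-sum identity $E[Q(G)]=\sum_{k=0}^{n}\Pr(Q(G)>k)$ together with Lemma~\ref{lem:Qlem}, the only cosmetic difference being that the paper passes through $|L_k|/\binom{n}{k}$ where you write $1-|W_k|/\binom{n}{k}$ directly. Your endpoint checks at $k=0$ and $k=n$ are a sensible addition but not a departure from the paper's argument.
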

\begin{proof}
For every $\pi$, $Q_\pi(G)$ is at most $n+1$. If $G$ is empty then $W_k$ is
empty for all $k$ and $\Qbar(G) = n+1$, as expected. Otherwise, $W_n$ has a
single element and $Q_\pi(G)$ is at most $n$. Thus by Lemma~\ref{lem:Qlem}
we have
\begin{align*}
\Qbar(G) & = E[Q(G)] \\
& = \sum_{k=0}^{n+1} k \Pr (Q(G) = k) \\
& = \sum_{k=0}^{n} \Pr (Q(G) > k) \\
& = \sum_{k=0}^n \frac{|L_k|}{\binom{n}{k}}\\
& = n+1 - \sum_{k=0}^n \frac{|W_k|}{\binom{n}{k}}
\end{align*}
\end{proof}

\begin{remark}
Note that the summation can start at $k=1$ because the game is
nontrivial. If we allowed trivial games, then the value of the formula for 
$\Qbar(G)$ would be $0$, which agrees with intuition. 
\end{remark}

\section{Changes of variable and collective measures}
\label{s:collective}

The number of random queries made in order to find a winning coalition
seems to us to be a fundamental quantity of a simple game. The quantity
$\Qbar(G)$ intuitively seems to be a measure of inertia or
\emph{resistance} (as discussed in \cite{FeMa1998}): its value is large
if winning coalitions are scarce, and small if they are plentiful. The
rescaled quantity $1 - \Qbar(G)/(n+1)$ looks like an index of what has been
called \emph{complaisance} \cite{FeMa1998, Lind2008} and
\emph{decisiveness}  \cite{Carr2005}. We consider far more general rescalings of 
$\Qbar(G)$, with interesting consequences as will be seen below.

\begin{definition}
\label{def:admissible}
Let $\funs$ be the set of all real-valued functions on the nonnegative integer quadrant $\mathbb{N}^2$.
Let $F\in\funs$ satisfy 
\begin{enumerate}[(i)]
\item $F(n,k)$ is decreasing in $k$ for each fixed $n$.
\item $F(n,0) = 1$ and $F(n,k) = 0$ whenever $k>n$.
\end{enumerate}
We say that $F$ is an \emph{admissible rescaling}.
\end{definition}

\begin{remark}
We do not require that $F$ be decreasing in $n$ for each fixed $k$.

\end{remark}

We shall see below that there is a direct relationship between $F$ and the function $f$ obtained 
as follows.

\begin{lemma}
\label{lem:fF}
There is a bijection  $F \leftrightarrow f$ given by
\begin{align}
\label{eq:fF}
f(n,k) & = \frac{F(n,k) - F(n,k+1)}{\binom{n}{k}}\\
F(n,k) & = \sum_{j=k}^n f(n,j) \binom{n}{j}.
\end{align}
Note that $F$ is admissible if and only if $f$ is nonnegative and 
$\sum_{k=0}^n f(n,k) \binom{n}{k} = 1$.

There is a bijection $F \leftrightarrow \mu$ given by 
\begin{align*}
F(n,k) & = \sum_{j=k}^n \mu(n,j) \\
\mu(n,j) & = F(n,k) - F(n,k+1)
\end{align*}
Note that $F$ is admissible if and only if for each $n$, 
$\mu(n,\cdot)$ is a probability measure on $\{0, \dots, n\}$.
\noproof
\end{lemma}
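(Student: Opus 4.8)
The plan is to verify that the two maps defined by the displayed formulas are mutually inverse, and then to translate the two defining conditions of admissibility (Definition~\ref{def:admissible}) into the stated conditions on $f$, and separately on $\mu$. Throughout I work with one value of $n$ at a time, since the relations couple only values sharing the same first coordinate. The entire content of the first bijection is a telescoping identity, and the boundary condition from part (ii) of Definition~\ref{def:admissible} is exactly what makes the telescope close up.

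First I would establish the round trip starting from $f$. Given $f$, set $F(n,k) = \sum_{j=k}^n f(n,j)\binom{n}{j}$; then for $0 \le k \le n$ the difference $F(n,k) - F(n,k+1)$ collapses to the single term $f(n,k)\binom{n}{k}$, so dividing by $\binom{n}{k}$ recovers $f(n,k)$. For the reverse round trip, starting from $F$ and setting $f(n,k) = \bigl(F(n,k)-F(n,k+1)\bigr)/\binom{n}{k}$, I would sum the telescope
\[
\sum_{j=k}^n f(n,j)\binom{n}{j} = \sum_{j=k}^n \bigl(F(n,j) - F(n,j+1)\bigr) = F(n,k) - F(n,n+1),
\]
and invoke $F(n,n+1)=0$ (which holds because $n+1>n$, by condition (ii)) to obtain $F(n,k)$. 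The one point requiring care is the domain: the map $f \mapsto F$ always returns a function vanishing for $k>n$, so the correspondence should be read between functions supported on $\{k \le n\}$. For admissible $F$ this support condition is built in, and it forces $f(n,k)=0$ for $k>n$ automatically.

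Next I would handle the characterization of admissibility in terms of $f$. Monotonicity of $F(n,\cdot)$ (condition (i)) says precisely that $F(n,k) - F(n,k+1) \ge 0$ for every $k$; since $\binom{n}{k}>0$, this is equivalent to $f(n,k) \ge 0$ for all $0 \le k \le n$, with the boundary case $k=n$ matching $F(n,n)\ge F(n,n+1)=0$ and the range $k>n$ being vacuous. The normalization $F(n,0)=1$ from condition (ii) reads, via the summation formula at $k=0$, as $\sum_{k=0}^n f(n,k)\binom{n}{k}=1$. Conversely, these two conditions on $f$ reconstruct an admissible $F$, the remaining requirement $F(n,k)=0$ for $k>n$ being automatic from the empty sum.

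Finally, the $\mu$ bijection is immediate once I observe that $\mu(n,j) = F(n,j)-F(n,j+1) = f(n,j)\binom{n}{j}$, so it is the same telescope rewritten without the binomial weights: nonnegativity of $\mu(n,\cdot)$ corresponds to monotonicity, and $\sum_{j=0}^n \mu(n,j)=F(n,0)=1$ corresponds to the normalization, giving exactly that $\mu(n,\cdot)$ is a probability measure on $\{0,\dots,n\}$. I do not anticipate a genuine obstacle here; the only thing to state carefully is the support convention that makes each correspondence an honest bijection rather than merely a pair of one-sided inverses.
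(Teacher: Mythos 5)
Your proof is correct: the telescoping verification of the two inverse formulas, the use of $F(n,n+1)=0$ to close the telescope, and the translation of conditions (i) and (ii) of Definition~\ref{def:admissible} into nonnegativity and normalization of $f$ (equivalently of $\mu$) are exactly the routine checks the paper regards as immediate, which is why the lemma is stated there without proof. Your remark on the support convention for $k>n$ is a sensible extra precaution but does not change the substance.
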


\begin{remark}
We shall often write $\mu_n(k)$ for $\mu(n,k)$. 
\end{remark}

We now define our candidate for a measure of decisiveness.

\begin{definition}
\label{def:Qstargen}
Let $G = (X,W)\in \sgames$. Define $\Qstar_F(G): \sgames \to \mathbb{R}$ by
$$\Qstar_F(G) = E[F(Q(G))]$$
where the expectation is taken with respect to the uniform distribution on permutations of $X$ as in 
Definition~\ref{def:Qbar}.
\end{definition}

\begin{proposition}
\label{prop:QstarF}

The function $\Qstar_F$ is a decisiveness index on $\sgames$. Explicitly,
$$
\Qstar_F(G) = \sum_{k=0}^n f(n,k) |W_k|
$$

where $f$ and $F$ are linked as in \eqref{eq:fF}.
\end{proposition}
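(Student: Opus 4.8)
The plan is to first establish the explicit formula, from which the properties making $\Qstar_F$ a decisiveness index will follow almost immediately. The starting point is the definition $\Qstar_F(G) = E[F(n,Q(G))]$, which I would write out as $\sum_k F(n,k)\Pr(Q(G)=k)$, where $n = |X|$ and the sum ranges over $k = 0,\ldots,n+1$ (recall that $Q(G)$ can equal $n+1$ on the empty game). The key tool is \emph{summation by parts}: rather than work with the pointwise probabilities $\Pr(Q(G)=k)$, I would re-express everything in terms of the cumulative distribution $P_k := \Pr(Q(G)\leq k)$, which Lemma~\ref{lem:Qlem} evaluates as $|W_k|/\binom{n}{k}$.

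Concretely, writing $\Pr(Q(G)=k) = P_k - P_{k-1}$ with the conventions $P_{-1} = 0$ and $P_{n+1} = 1$, an Abel summation converts $\sum_k F(n,k)(P_k - P_{k-1})$ into $\sum_{k=0}^{n}\left[F(n,k) - F(n,k+1)\right]P_k$ plus a boundary term $F(n,n+1)P_{n+1}$. Two features of admissibility make this clean: the boundary term vanishes because $F(n,n+1) = 0$, and the lower end causes no trouble because $P_0 = 0$ (as $\emptyset\notin W$ forces $|W_0| = 0$). Now I would substitute the two identities from \eqref{eq:fF}, namely $F(n,k) - F(n,k+1) = f(n,k)\binom{n}{k}$, together with $P_k = |W_k|/\binom{n}{k}$ from Lemma~\ref{lem:Qlem}; the binomial coefficients cancel, leaving exactly $\sum_{k=0}^{n} f(n,k)|W_k|$. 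This is the heart of the argument: the forward difference of $F$ that appears naturally in summation by parts is, up to the binomial weight, precisely the function $f$.

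With the formula in hand, the index properties follow from the characterization of admissibility in Lemma~\ref{lem:fF}. Nonnegativity of $\Qstar_F$ is immediate since $f(n,k)\geq 0$ and $|W_k|\geq 0$; monotonicity (adding winning coalitions cannot decrease the index) holds because each $|W_k|$ is non-decreasing under enlargement of $W$; the empty game gives value $0$ since all $|W_k| = 0$; and the bound $\Qstar_F(G)\leq 1$ follows from $|W_k|\leq\binom{n}{k}$ together with the normalization $\sum_{k=0}^n f(n,k)\binom{n}{k} = 1$. I expect the only genuinely delicate point to be the bookkeeping of boundary terms in the summation by parts --- in particular correctly accounting for the value $k = n+1$ on empty games and verifying $P_0 = 0$ --- rather than any conceptual difficulty; once the telescoping is set up, the remainder is the routine cancellation described above.
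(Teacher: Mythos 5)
Your proposal is correct and is essentially the paper's own argument run in the opposite direction: the paper starts from $\sum_k f(n,k)\lvert W_k\rvert$, inserts $\lvert W_k\rvert=\binom{n}{k}\Pr(Q(G)\leq k)$ from Lemma~\ref{lem:Qlem}, and interchanges a double sum to recognize $E[F(Q(G))]$, which is exactly your Abel summation with the boundary term $F(n,n+1)=0$ handled by admissibility. The concluding verification that the explicit formula yields a decisiveness index (values in $[0,1]$ via nonnegativity of $f$ and the normalization $\sum_k f(n,k)\binom{n}{k}=1$) also matches the paper.
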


\begin{proof}
Let $f$ be as given in \eqref{eq:fF}. Then
\begin{align*}
\sum_{k=0}^n f(n,k) |W_k| & = \sum_{k=0}^n f(n,k) \binom{n}{k} \Pr(Q(G)\leq k) \\
& = \sum_{k=0}^n \sum_{j=0}^k f(n,k) \binom{n}{k} \Pr(Q(G)=j) \\
& = \sum_{j=0}^n \left(\sum_{k=j}^n f(n,k) \binom{n}{k}\right) \Pr(Q(G)=j)\\
& = \sum_{j=0}^n F(n,j) \Pr(Q(G)=j)\\
& = E[F(Q(G))].
\end{align*}
Now \eqref{eq:fF} and the standing assumptions on $F$ imply that 
$\Qstar_F$ takes values between $0$ and $1$ and these values are attained. Hence $\Qstar_F$ 
is a decisiveness index on $\sgames$ according to the definition in \cite{Wils2011}.
\end{proof}

\begin{example}
\label{eg:Coleman}
Choosing $f(n,k) = 2^{-n}$ yields the Coleman index \cite{Cole1971}. In this case $\mu_n$ is the 
binomial distribution with parameter $1/2$, and 
$$
F(n,k) = 2^{-n} \sum_{j=k}^n  \binom{n}{j}
$$
which equals the probability that a uniformly randomly chosen subset has size at least $k$.
\end{example}

\begin{example}
\label{eg:simplest}

The simplest functional form of the construction above occurs
when  $F(n,k) = 1 - k/(n+1)$, in which case $\mu_n$ is the uniform distribution and
$$
\Qstar_0(G):= \Qstar_{F}(G) = \frac{1}{n+1} \sum_{k=0}^n \frac{1}{\binom{n}{k}} |W_k| 
= 1 - \frac{\Qbar(G)}{n+1}.
$$
\end{example}

There is a close connection between power and decisiveness measures for
simple games and value theory of TU-games \cite{Wils2011}. In view of
that connection, it is natural to generalize to TU-games.

\begin{definition} For each admissible $F$, define a map $\Qstar_F:
\games \to \mathbb{R}$ as follows. For each game $G = (X, v)$,
\label{def:QstarTU}
\begin{align*}
\Qstar_F(G) & :=  \sum_{k=0}^n f(n,k) \sum_{|S|=k, S\subseteq X} v(S)
 = \sum_{S\subseteq X} f(n,|S|) v(S).
\end{align*}
\end{definition}


We usually denote $\Qstar_F(G)$ simply by $\Qstar_F$ when no confusion is likely.

This extended definition of $\Qstar_F$ yields a very general object,
called a \emph{collective value} in \cite{Wils2011}.

\subsection{The self-dual case}
\label{ss:self-dual}

We can derive some special formulae for $\Qstar_F$ in important special
cases. We recall that the \emph{dual} of a TU-game $G=(X,v)$ is the
TU-game $G^* = (X,v^*)$ where $v^*(S) = v(X) - v(X\setminus S)$ for each
$S$. In the case of simple games, winning coalitions become losing, and vice versa,
when passing to the dual. A game is \emph{self-dual} if $v^*(S) = v(S)$ for all $S$.

\begin{proposition}
Let $G=(X,v) \in \games$ and suppose that $F$ satisfies the identity 
\begin{equation}
\label{eq:Fsym}
F(n,k) - F(n,k+1) = F(n,n-k) - F(n,n+1-k).
\end{equation}

If $G$ is self-dual, then $\Qstar_F(G) = v(X)/2$.
\end{proposition}

\begin{remark}
The condition on $F$ says that the probability measure $\mu_n$ is
symmetric on $[0..n]$. For example, the Coleman index satisfies this
property, and Proposition~\ref{prop:formulae} was proved for that
special case in \cite[Proposition 3.4]{Carr2005}. The index $\Qstar_0$
also satisfies the condition. This condition we call \emph{self-duality}
for the following reason. Given a collective value $I$, define another
collective value $I^*$ by $I^*(G) = I(G^*)$. The value is self-dual if
$I=I^*$. It is easily seen that a collective value of the form
$\Qstar_F$ is self-dual if and only if $F$ satisfies the stated
condition.
\end{remark}

\begin{proof}
Let $I = \qstar_F$ satisfy the stated condition and let $G$ be a self-dual game. Then 
\begin{align*}
I(G) + I(G^*) & = \sum_S f(n,|S|) v(S) + \sum_S f(n,|S|) \left[v(X) - v(S)\right] \\
& = \sum_k f(n,k) \binom{n}{k} v(X) \\
& = v(X)
\end{align*}
while
\begin{align*}
I(G) - I(G^*) & = \sum_S f(n,|S|) v(S) - \sum_S f(n,|S|) \left[v(X) - v(S)\right]\\
& = \sum_S f(n,|S|) v(S) - \sum_S f(n,|X\setminus S|) v(X\setminus S) \\
& = 0.
\end{align*}
The result follows by solving for $I(G)$.
\end{proof}

In the case of simple games we can say a little more. Recall that a
simple game is \emph{proper} if the complement of each winning coalition
is losing, and \emph{strong} if the complement of every losing
coalition is winning.

\begin{proposition}
\label{prop:formulae}
Let $G = (N,W)$ be a simple game and suppose that $F$ satisfies the identity \eqref{eq:Fsym}.
\begin{enumerate}[(i)]
\item If $G$ is proper and strong then $\Qstar_F(G) = 1/2$.
\item If $G$ is proper and not strong then $\Qstar_F(G) < 1/2$.
\item If $G$ is strong and not proper then $\Qstar_F(G) > 1/2$.
\end{enumerate}
\end{proposition}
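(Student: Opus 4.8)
The plan is to compare $\Qstar_F(G)$ with its value on the dual game $G^*=(X,v^*)$, where $v^*(S)=v(X)-v(X\setminus S)$, and to extract the three cases from two identities. First I note that \eqref{eq:Fsym} says precisely that $\mu_n(k)=\mu_n(n-k)$, equivalently that $f(n,k)=f(n,n-k)$. In each case the grand coalition is winning (a strong game contains $X$, and a nonempty monotone proper game does too), so $v(X)=1$. Writing $\Qstar_F(G)=\sum_{S\subseteq X}f(n,|S|)v(S)$ and $\Qstar_F(G^*)=\sum_{S\subseteq X}f(n,|S|)\bigl(1-v(X\setminus S)\bigr)$, two facts drop out: reindexing $S\mapsto X\setminus S$ and using the symmetry of $f$ together with $\sum_S f(n,|S|)=1$ gives
\[
\Qstar_F(G)+\Qstar_F(G^*)=1,
\]
while subtracting the two sums directly (no symmetry needed) gives
\[
\Qstar_F(G)-\Qstar_F(G^*)=\sum_{S\subseteq X}f(n,|S|)\bigl(v(S)+v(X\setminus S)-1\bigr).
\]

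Next I read off the signs. For each $S$ the bracket $v(S)+v(X\setminus S)-1$ equals $+1$ if both $S$ and $X\setminus S$ are winning, $-1$ if both are losing, and $0$ otherwise. If $G$ is proper there is no pair of complementary winning coalitions, so every bracket is $\le 0$, the second identity gives $\Qstar_F(G)\le\Qstar_F(G^*)$, and combined with the first identity this is $\Qstar_F(G)\le 1/2$. If $G$ is strong there is no pair of complementary losing coalitions, every bracket is $\ge 0$, and the same reasoning gives $\Qstar_F(G)\ge 1/2$. In case (i), proper and strong, every bracket vanishes, so $\Qstar_F(G)=1/2$ (equivalently, proper-and-strong is exactly self-duality $W=W^*$, so this is the preceding proposition with $v(X)=1$).

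The main obstacle is turning the weak inequalities of (ii) and (iii) into strict ones. Failure of strongness (resp.\ properness) produces a complementary pair $\{S_0,X\setminus S_0\}$ that is both losing (resp.\ both winning); the two corresponding brackets equal $-1$ (resp.\ $+1$) and contribute $\pm\bigl(f(n,|S_0|)+f(n,n-|S_0|)\bigr)$ to the second identity. Strictness therefore holds as soon as $f(n,|S_0|)>0$. This is the delicate point: the violating pair may sit at any size, so what is really required is that $f(n,k)>0$ for all $0\le k\le n$, i.e.\ that $\mu_n$ has full support. That positivity is automatic when $F(n,\cdot)$ is strictly decreasing on $\{0,\dots,n\}$, and in particular holds for the Coleman index and for $\Qstar_0$; granting it, the isolated nonzero term makes $\Qstar_F(G)-\Qstar_F(G^*)$ strictly negative in (ii) and strictly positive in (iii), and the first identity then yields $\Qstar_F(G)<1/2$ and $\Qstar_F(G)>1/2$ respectively. (If $\mu_n$ is permitted to vanish the strict inequalities can genuinely fail --- for instance, concentrate $\mu_n$ at $n/2$ and take a proper, non-strong game all of whose both-losing pairs avoid size $n/2$ --- so some positivity hypothesis is essential here.)
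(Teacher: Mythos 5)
Your proof is correct and takes essentially the same route as the paper's: both compare $G$ with its dual via the two identities $\Qstar_F(G)+\Qstar_F(G^*)=1$ and $\Qstar_F(G)-\Qstar_F(G^*)=\sum_k f(n,k)\left(|C_k|-|Q_k|\right)$, your bracket $v(S)+v(X\setminus S)-1$ being exactly the indicator that separates the complementary-winning pairs (the paper's $C_k$) from the complementary-losing pairs (the paper's $Q_k$). Your caveat about strictness in (ii) and (iii) is well taken and is in fact a point the paper's proof glosses over when it ``solves for $\Qstar_F(G)$'': a nonempty $C_k$ or $Q_k$ only contributes a nonzero term if $f(n,k)>0$ at that size, and since admissibility merely requires $F(n,\cdot)$ to be weakly decreasing, an example such as $\mu_n$ concentrated at $n/2$ (with a proper, non-strong game whose both-losing pairs avoid size $n/2$) shows the strict inequalities can fail, so a positivity (regularity) hypothesis on $f$ should be added --- it does hold for $\Qstar_0$ and the Coleman index, the cases the paper actually uses.
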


\begin{proof}

For each $k$ we define four types of subset: $D_k$ (respectively
$C_k$) consists of those which are winning, and whose complement is not
(respectively is), whereas $Q_k$ (respectively $P_k$) consists of those
which are losing, and whose complement is not (respectively is). Complementation 
yields a map from $G$ to $G^*$ such that $D_k \bij P_{n-k}, C_k \bij C_{n-k}, Q_k \bij Q_{n-k}$. 
Thus $\Qstar_F(G) + \Qstar_F(G^*) = 1$ as in the proof of the previous proposition, and
\begin{align*}
\Qstar_F(G) - \Qstar_F(G^*) &= \sum_k f(n,k)  \left(|D_k| + |C_k|\right) 
- \sum_k f(n,k) \left(|Q_k| + |P_k|\right)\\
& = \sum_k f(n,k)  \left(|D_k| + |C_k|\right) - \sum_k f(n,k) \left(Q_{k} + D_{k}\right)\\
& = \sum_k f(n,k) \left(|C_k| - |Q_k|\right).
\end{align*}
$G$ is proper if and only if $C_k = 0$ for all $k$, while  $G$ is strong
if and only if $Q_k = 0$ for all $k$ (a simple game is proper and strong
if and only if it is self-dual). The results follow by solving for
$\Qstar_F(G)$.
\end{proof}

\begin{example}
\label{eg:majority}
Next, we consider the unweighted qualified majority voting game. The
winning coalitions are precisely those of size at least $k_0$, for some
fixed $k_0$ (depending on $n$). The value of
$\Qstar_F$ on such a game equals $ \sum_{k=k_0}^n
f(n,k) \binom{n}{k} = F(n,k_0)$. Thus if $n$ is odd and $k_0
= (n+1)/2$ (the straight majority game), $\Qstar_F$ has value $F(n,(n+1)/2$. 
If furthermore $F$ satisfies the symmetry condition \eqref{eq:Fsym}, then 
direct computation shows that $Q^*_F$ takes the value $1/2$. This is to be expected, 
since the game in question is proper and strong.
\end{example}

\section{Individual measures}
\label{s:individual}

In this section we discuss properties of the \emph{marginal function} of $\Qstar_F$,
which we denote $\qstar_F$. Explicitly, $\qstar_{F,i}(G) = \Qstar_F(G) -
\Qstar_F(G_{-\{i\}})$. We first review some properties of semivalues and generalizations.

\subsection{Semivalues and related concepts}
\label{ss:semivalue}

Several classes of allocations have been discussed in the literature. They can be given by axiomatic 
characterizations, but explicit formulae are more useful for our purposes. 

\begin{definition}
\label{def:weak semivalue}
Let $X$ be a finite set. 

A \emph{weighted weak semivalue} on $X$ is an allocation on $X$ that has the form 
$$
\psi_i(v) = \sum_{S\subseteq X} p(S) \left[v(S) - v(S\setminus\{i\}\right]
$$
where $p(S) \geq 0$.

A \emph{weak semivalue} on  $X$ is a weighted weak semivalue for which 
$\sum_{S:i\in S} p(S) = 1$ for each $i\in X$.

\end{definition}

\begin{remark}
The above concepts were introduced in \cite{CaSa2000} in axiomatic terms. An equivalent formulation is 
that an allocation is a weighted weak semivalue if and only if it satisfies the standard axioms of 
Linearity, Positivity, Projection and Balanced Contributions.
\end{remark}

\begin{definition}
\label{def:weighted semivalue}
A \emph{weighted semivalue} on $X$ is a weighted weak semivalue for which $p(S)$ depends only on 
$|S|$, and thus has the form
\begin{equation}
\label{eq:weighted semivalue}
\psi_i(v) = \sum_{k=0}^n f(n,k) \sum_{|S|=k, S \subseteq X} \left[v(S) - v(S\setminus\{i\}\right]
\end{equation}
where $f(n,k) \geq 0$ for all $n,k$.

A \emph{semivalue} on $X$ is a weighted semivalue on $X$ that in addition satisfies the normalization 
condition
\begin{equation}
\label{eq:fnorm}
\sum_{k=1}^n \binom{n-1}{k-1} f(n,k) = 1.
\end{equation}
\end{definition}

\begin{remark}
The definitions have the unfortunate consequence that a semivalue is a
weighted weak semivalue that is both a weak semivalue and a weighted
semivalue! The term \emph{weighted semivalue}  is formally used in the
present paper for the first time, to our knowledge. 
\end{remark}

In \cite{DNW1981} it was proven that semivalues are precisely the allocations satisfying 
the standard axioms Linearity, Positivity, Projection and  Anonymity. The last says that 
if $\pi:X \to X$ is a permutation, then $\sval(\pi i) = \pi \sval(i)$ for each $i\in X$.
Note that because of Anonymity, we may assume that $X = X_n:=\{1,2,\dots, n\}$, 
where $n=|X|$. Let $\games$ denote the union $\bigcup_n \games(X_n)$. 
A \emph{semivalue} on $\games$ is a function that for each $n$ restricts to a 
semivalue on $X_n$. In \cite{DNW1981} it was shown that in addition to the explicit form 
\eqref{eq:fnorm}, the recursion 
\begin{equation}
\label{eq:frec}
f(n,k) = f(n+1,k) + f(n+1,k+1)
\end{equation}
is necessary and sufficient for such an extension. 

The particular weighted semivalue we have in mind is the marginal
function of $\Qstar$. We first show that rescaling is needed in all but
the most trivial cases.

\begin{proposition}
\label{prop:f semivalue}
Let $F$ be an admissible rescaling, let $f$ be related to $F$ as in Lemma~\ref{lem:fF} and let $\psi$ be  
defined as in \eqref{eq:weighted semivalue}. Then 

\begin{enumerate}[(i)]
\item $\psi$ gives a weighted semivalue on $X_n$ for each $n$. 
\item Let $c_n = \sum_{k=1}^n \binom{n-1}{k-1} f(n,k)$. Then $\tilde{\psi}$ defined by 
$\tilde{\psi}_i(v) = \psi_i(v)/c_n$ gives a semivalue on $X_n$ for each $n$.
\item $\psi$ gives a semivalue on $X_n$ if and only if $f(n,k) = 0$ for $1\leq k \leq n-1$ and $f(n,n) = 1$.
\end{enumerate}

\end{proposition}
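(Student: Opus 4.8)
The plan is to prove the three parts by directly comparing the defining formula \eqref{eq:weighted semivalue} against the two normalization conditions that distinguish weighted semivalues from semivalues, namely \eqref{eq:fnorm} and the recursion \eqref{eq:frec}. The key observation driving everything is that admissibility of $F$ (equivalently, $f(n,k)\geq 0$ and $\sum_{k=0}^n f(n,k)\binom{n}{k}=1$ from Lemma~\ref{lem:fF}) gives us nonnegativity of the weights $p(S)=f(n,|S|)$ for free, but does \emph{not} by itself give the semivalue normalization. So I expect part~(i) to be essentially immediate, part~(ii) to be a one-line rescaling, and part~(iii) to be the substantive computation.

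For part~(i), I would simply note that setting $p(S):=f(n,|S|)$ makes $\psi$ of the form \eqref{eq:weighted semivalue} with $p(S)$ depending only on $|S|$, and admissibility guarantees $f(n,k)\geq 0$ for all $k$. That is exactly Definition~\ref{def:weighted semivalue} of a weighted semivalue, so nothing further is needed. For part~(ii), since $\tilde{\psi}_i=\psi_i/c_n$ scales the coefficient $f(n,k)$ to $f(n,k)/c_n$, the normalization sum becomes
\begin{align*}
\sum_{k=1}^n \binom{n-1}{k-1}\frac{f(n,k)}{c_n} = \frac{1}{c_n}\sum_{k=1}^n \binom{n-1}{k-1} f(n,k) = \frac{c_n}{c_n}=1,
\end{align*}
which is precisely \eqref{eq:fnorm}; nonnegativity is preserved since $c_n>0$ (it is a positive sum as long as some $f(n,k)>0$, which holds because $\sum_k f(n,k)\binom{n}{k}=1$). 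The only point requiring a remark is that $c_n$ is strictly positive, which follows from admissibility.

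The heart of the proposition is part~(iii), where the main obstacle lies. Here I must show that an admissible $\psi$ is \emph{already} normalized as a semivalue exactly when $f$ is the degenerate sequence supported at $k=n$. One direction is a check: if $f(n,k)=0$ for $1\leq k\leq n-1$ and $f(n,n)=1$, then \eqref{eq:fnorm} reads $\binom{n-1}{n-1}\cdot 1=1$, so $\psi$ is a semivalue. The converse is where I expect to do real work. The constraint I must exploit is that admissibility forces $\sum_{k=0}^n f(n,k)\binom{n}{k}=1$ (the full-sum normalization from Lemma~\ref{lem:fF}), while being a semivalue forces $\sum_{k=1}^n f(n,k)\binom{n-1}{k-1}=1$ (the partial-sum normalization). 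The plan is to subtract these two identities and extract a single sign-definite expression. Using the Pascal-type identity $\binom{n}{k}=\frac{n}{k}\binom{n-1}{k-1}$, or more usefully $\binom{n}{k}-\binom{n-1}{k-1}=\binom{n-1}{k}$, the difference of the two normalization sums collapses to
\begin{align*}
f(n,0)\binom{n}{0}+\sum_{k=1}^n f(n,k)\Bigl[\binom{n}{k}-\binom{n-1}{k-1}\Bigr] = f(n,0)+\sum_{k=1}^{n} f(n,k)\binom{n-1}{k} = 0,
\end{align*}
and since every term is nonnegative (here $\binom{n-1}{n}=0$ kills the $k=n$ contribution), each must vanish: $f(n,0)=0$ and $f(n,k)=0$ for $1\leq k\leq n-1$. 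Combined with $f(n,n)\binom{n}{n}=1$ from the semivalue normalization, this yields $f(n,n)=1$. The delicate part is bookkeeping the index ranges correctly — ensuring the $k=0$ term is handled separately and that the $k=n$ term genuinely drops out — but once the two normalizations are written side by side, the nonnegativity of $f$ forces the degeneracy immediately. I do not anticipate needing the recursion \eqref{eq:frec} for this statement, since part~(iii) concerns semivalues on a fixed $X_n$ rather than a coherent family across all $n$.
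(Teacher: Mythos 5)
Your proof is correct and follows essentially the same route as the paper's: part (i) from nonnegativity of $f$, part (ii) by rescaling, and part (iii) by subtracting the admissibility normalization $\sum_k \binom{n}{k}f(n,k)=1$ from the semivalue normalization and invoking nonnegativity term by term. The only cosmetic difference is that the paper writes the resulting sign-definite sum as $\sum_k \bigl(1-\tfrac{k}{n}\bigr)\binom{n}{k}f(n,k)$ while you use Pascal's identity to write it as $f(n,0)+\sum_{k\geq 1}\binom{n-1}{k}f(n,k)$ --- these agree term by term --- and your parenthetical that $c_n>0$ is slightly off (all the mass of $\mu_n$ could in principle sit at $k=0$, giving $c_n=0$), though the paper silently skips this same point.
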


\begin{proof}
Note that $\psi$ gives a weighted semivalue on $\games_n$ because $F$ is
admissible, hence $f(n,k) \geq 0$ for all $n,k$. Dividing by $c_n$
ensures that the normalization condition \eqref{eq:fnorm} is satisfied.
Finally, suppose that $c_n = 1$ and let $a_{nk} = \binom{n}{k} f(n,k)$.
By admissibility, $a_{nk} \geq 0$ and $\sum_k a_{nk} = 1$. By
hypothesis, $1 = c_n = \sum_k \frac{k}{n} a_{nk}$. Subtracting these two
equalities yields the result.
\end{proof}

\subsection{The marginal function of $\Qstar_F$}
\label{s:qstar}

The marginal function of a decisiveness index is often interpreted as a
power index (the analogue for TU-games is the relationship between a
potential and an allocation \cite{Wils2011, LaVa2005b}). We now explore this direction.

\begin{proposition}
\label{prop:potential}
Let $F$ be an admissible rescaling. Then 
\begin{enumerate}[(i)]
\item $\Qstar_F$ is the potential function of a  function $\qstar_F$ that for each 
$n$ induces a weighted 
semivalue on $\games_n$, given by
\begin{align*}
\qstar_{F,i} & =   \sum_{k=0}^n  f(n,k) \sum_{|S|=k} \left[ v(S) - v(S\setminus\{i\})\right]\\
& = \sum_{S:i\in S} f(n,|S|) D_i(S)
\end{align*}
Here $F$ and $f$ are related as in Lemma~\ref{lem:fF}.
\item
Let 
$$
c_n:= \frac{1}{n}\sum_{k=1}^{n} k \left[ F(n,k) - F(n,k+1) \right].
$$
Then the normalized quantity $\qstar_F/ c_n$ is a semivalue on $X_n$ for each $n$.
\item
$\qstar_F$ is a weighted semivalue on $\games$  if and only if 
$F$ satisfies the recursion identity 
\begin{equation}
\label{eq:Frec}
F(n,k) - F(n,k+1) = \frac{n+1-k}{n+1}  F(n+1,k) 
+ \frac{2k-n}{n+1} F(n+1,k+1) - \frac{k+1}{n+1} F(n+1, k+2).
\end{equation}
If, furthermore, $c_n$ is independent of $n$, then the normalized quantity $\qstar_F/ c_n$ is a semivalue on  
$\games$, given by
$$
\sum_{S:i\in S} \frac{f(n,|S|)}{\Qstar_F(\mathcal{U}_1)} D_i(S)
$$
\end{enumerate}
\end{proposition}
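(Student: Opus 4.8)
The plan is to prove the three parts in sequence, establishing first the potential/marginal relationship, then the normalization, then the recursion characterization.

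\medskip

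\noindent\textbf{Part (i): the potential relationship and the weighted semivalue form.}
First I would verify directly that $\Qstar_F$ is the potential of $\qstar_F$ in the sense that $\qstar_{F,i}(G) = \Qstar_F(G) - \Qstar_F(G_{-\{i\}})$, where $G_{-\{i\}}$ is the game on $X\setminus\{i\}$ obtained by restriction. Using Definition~\ref{def:QstarTU}, the difference $\Qstar_F(G) - \Qstar_F(G_{-\{i\}})$ is a sum over subsets $S$ of $X$. I would split the sum according to whether $i\in S$: the subsets not containing $i$ appear in both $\Qstar_F(G)$ and $\Qstar_F(G_{-\{i\}})$, but \emph{with different weights}, since the ambient player set shrinks from $n$ to $n-1$, so $f(n,|S|)$ is replaced by $f(n-1,|S|)$. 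The naive cancellation therefore does not occur unless $f$ satisfies the additivity recursion, which is exactly the subtlety deferred to part (iii). For part (i) I only need that $\qstar_F$, as \emph{defined} by the displayed formula, is a weighted semivalue on each fixed $\games_n$; this is immediate because the coefficients $f(n,|S|)$ depend only on $|S|$ and are nonnegative by admissibility (Lemma~\ref{lem:fF}), so the formula matches \eqref{eq:weighted semivalue} with the stated $f$. The second displayed equality, $\qstar_{F,i} = \sum_{S:i\in S} f(n,|S|) D_i(S)$ with $D_i(S) = v(S) - v(S\setminus\{i\})$, is just a reindexing: the marginal $v(S) - v(S\setminus\{i\})$ vanishes when $i\notin S$, so only subsets containing $i$ contribute.

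\medskip

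\noindent\textbf{Part (ii): normalization to a semivalue.}
Here I would compute the normalization constant $c_n = \sum_{k=1}^n \binom{n-1}{k-1} f(n,k)$ appearing in the semivalue condition \eqref{eq:fnorm}, and show it equals the stated $\frac{1}{n}\sum_{k=1}^n k\,[F(n,k)-F(n,k+1)]$. Substituting $f(n,k) = [F(n,k)-F(n,k+1)]/\binom{n}{k}$ from \eqref{eq:fF} and using the identity $\binom{n-1}{k-1}/\binom{n}{k} = k/n$, the two expressions coincide term by term. Then Proposition~\ref{prop:f semivalue}(ii) gives immediately that $\qstar_F/c_n$ satisfies the semivalue normalization on $X_n$, so it is a semivalue there. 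I would also observe, for use in part (iii), that $c_n$ is the expected stopping level $\frac{1}{n}E_{\mu_n}[k]$ under the measure $\mu_n$ of Lemma~\ref{lem:fF}, and that it equals $\Qstar_F(\mathcal{U}_1)$, the value of $\Qstar_F$ on the dictatorial unanimity game, since that game has exactly $\binom{n-1}{k-1}$ winning coalitions of size $k$.

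\medskip

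\noindent\textbf{Part (iii): the recursion and the obstacle.}
This is the heart of the proposition and where I expect the main difficulty. A weighted semivalue on all of $\games$ (not just each $\games_n$) requires the consistency recursion \eqref{eq:frec}, namely $f(n,k) = f(n+1,k) + f(n+1,k+1)$. The task is to translate this condition on $f$ into the stated condition \eqref{eq:Frec} on $F$. I would substitute the bijection formula $f(n,k) = [F(n,k)-F(n,k+1)]/\binom{n}{k}$ into \eqref{eq:frec}, clear denominators using the binomial identities relating $\binom{n}{k}$, $\binom{n+1}{k}$ and $\binom{n+1}{k+1}$, and collect the resulting coefficients of $F(n+1,k)$, $F(n+1,k+1)$ and $F(n+1,k+2)$. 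The main obstacle is purely algebraic bookkeeping: one must verify that the coefficients simplify exactly to $\frac{n+1-k}{n+1}$, $\frac{2k-n}{n+1}$ and $-\frac{k+1}{n+1}$ respectively, and that no cross terms survive. The ratios $\binom{n}{k}/\binom{n+1}{k} = \frac{n+1-k}{n+1}$ and $\binom{n}{k}/\binom{n+1}{k+1} = \frac{k+1}{n+1}$ are the key simplifications that make the middle coefficient collapse to $\frac{2k-n}{n+1}$. Finally, when $c_n$ is independent of $n$, dividing the globally consistent weighted semivalue $\qstar_F$ by the constant $c_n = \Qstar_F(\mathcal{U}_1)$ preserves the recursion \eqref{eq:frec} while imposing the normalization \eqref{eq:fnorm} simultaneously for all $n$; by the result of \cite{DNW1981} quoted in the excerpt this is exactly a semivalue on $\games$, and its explicit form is $\sum_{S:i\in S} \frac{f(n,|S|)}{\Qstar_F(\mathcal{U}_1)} D_i(S)$ as claimed.
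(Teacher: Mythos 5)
Your proposal is correct and follows essentially the same route as the paper's own proof: part (i) is read off from Proposition~\ref{prop:f semivalue}, part (ii) reduces to the identity $\binom{n-1}{k-1}/\binom{n}{k}=k/n$ together with the observation that $c_n=\Qstar_F(\mathcal{U}_{\{1\}})$, and part (iii) is obtained by substituting the bijection of Lemma~\ref{lem:fF} into the coherence recursion \eqref{eq:frec} and simplifying exactly as you describe. Your side remark in part (i) --- that the literal potential identity $\qstar_{F,i}(G)=\Qstar_F(G)-\Qstar_F(G_{-\{i\}})$ already forces the recursion $f(n-1,k)=f(n,k)+f(n,k+1)$ because the ambient weight changes from $f(n,\cdot)$ to $f(n-1,\cdot)$ --- is a genuine subtlety that the paper's one-line justification of part (i) passes over, and your decision to take the displayed formula as the definition of $\qstar_F$ for fixed $n$ is the right way to keep part (i) self-contained.
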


\begin{proof}
The first part follows from Proposition~\ref{prop:f semivalue} because
$\qstar_F$ has exactly the form stated. The other results follow from
the  basic characterization of semivalues, translating the formulae for
$f$ into those for $F$. The coherence recursion \eqref{eq:frec}
translates into
\begin{equation*}
\begin{split}
F(n,k) - F(n,k+1) &= \frac{n+1-k}{n+1} \left[ F(n+1,k) - F(n+1,k+1)\right] \\
&+ \frac{k+1}{n+1} \left[F(n+1,k+1) - F(n+1, k+2) \right].
\end{split}
\end{equation*}

Note that $c_n$ is precisely the value $\Qstar_F (\mathcal{U}_{\{1\}})$ of $\Qstar_F$ on the
dictatorial game with $n$ players. The result now follows by algebraic simplification.
\end{proof}

The construction above is in fact universal.

\begin{proposition}
\label{prop:bijprobsemi}
There is a bijection between probability measures on $\{0,1,\dots, n\}$ and 
weighted semivalues on $\games_n$ given by $\mu_n \leftrightarrow \qstar_F$.
\end{proposition}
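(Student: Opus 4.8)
The plan is to realize the asserted correspondence as a composition of three bijections, only the last of which requires genuinely new work. By Lemma~\ref{lem:fF} we already have, for each fixed $n$, a bijection between probability measures $\mu_n$ on $\{0,\dots,n\}$ and admissible functions $F(n,\cdot)$, together with the further bijection $F(n,\cdot)\leftrightarrow f(n,\cdot)$ of \eqref{eq:fF}; under the composite these read $f(n,k)=\mu_n(k)/\binom{n}{k}$. It therefore suffices to show that the assignment $f(n,\cdot)\mapsto\qstar_F$, sending a coefficient vector to the allocation \eqref{eq:weighted semivalue}, is a bijection onto the weighted semivalues on $\games_n$.

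Surjectivity is essentially definitional: by Definition~\ref{def:weighted semivalue} every weighted semivalue on $X_n$ has the form \eqref{eq:weighted semivalue} for some nonnegative $f(n,\cdot)$, and Proposition~\ref{prop:potential}(i) identifies this allocation with $\qstar_F$ for the matching $F$. For injectivity I would read the coefficients back off the allocation by testing against a spanning family of games. Using the delta-games $u_T$ (defined by $u_T(S)=1$ if $S=T$ and $0$ otherwise), a short direct computation gives $\qstar_{F,i}(u_T)=f(n,|T|)$ whenever $i\in T$ and $\qstar_{F,i}(u_T)=-f(n,|T|+1)$ whenever $i\notin T$. Hence the allocation determines $f(n,1),\dots,f(n,n)$ uniquely, so $f(n,\cdot)\mapsto\qstar_F$ is injective; since this map is linear in the coefficients, injectivity is just the statement that $\qstar_F\equiv 0$ forces $f(n,k)=0$ for every $k\geq 1$.

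Composing the maps, the bijection $\mu_n\leftrightarrow\qstar_F$ is given explicitly by $\mu_n(k)=\binom{n}{k}f(n,k)$ for $k\geq 1$, with $\mu_n(0)$ recovered as the residual mass $1-\sum_{k\geq 1}\binom{n}{k}f(n,k)$. The main obstacle, and the one point that goes beyond Lemma~\ref{lem:fF}, is to verify that this reconstruction lands in the probability simplex: nonnegativity of $\mu_n(k)$ for $k\geq 1$ is immediate from $f\geq 0$, but one must confirm that the coefficients appearing in the image satisfy the total-mass condition $\sum_{k=0}^n\binom{n}{k}f(n,k)=1$ that makes $\mu_n$ a probability measure. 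This is exactly the admissibility normalization of Lemma~\ref{lem:fF}, and it is what fixes the coefficient $f(n,0)$, which is otherwise invisible since the $k=0$ term of \eqref{eq:weighted semivalue} vanishes identically. Once this normalization is in force the two maps are mutually inverse, which completes the proof.
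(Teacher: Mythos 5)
Your injectivity argument via the delta-games $u_T$ is correct, and it is actually more than the paper supplies: the paper's proof does not address injectivity at all, and your computation $\qstar_{F,i}(u_T)=f(n,|T|)$ for $i\in T$ cleanly recovers $f(n,1),\dots,f(n,n)$ from the allocation. The reduction through Lemma~\ref{lem:fF} is also the right skeleton. The genuine gap is in surjectivity, and you have put your finger on it without resolving it. For an arbitrary weighted semivalue the coefficients $f(n,1),\dots,f(n,n)$ are only required to be nonnegative, so the residual mass $1-\sum_{k\geq 1}\binom{n}{k}f(n,k)$ that you propose to assign to $\mu_n(0)$ can be negative, in which case the reconstruction does not land in the probability simplex and no admissible $F$ exists. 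This is not a hypothetical worry: the Banzhaf value of Example~\ref{eg:Banzhaf} has $f(n,k)=2^{1-n}$, so $\sum_k\binom{n}{k}f(n,k)=2$, and the Shapley value of Example~\ref{eg:Shapley} has $\sum_k\binom{n}{k}f(n,k)=H_n>1$ for $n\geq 2$; neither is literally of the form $\qstar_F$ for an admissible $F$. So the map $\mu_n\mapsto\qstar_F$ is injective, but its image is only the set of weighted semivalues satisfying $\sum_{k\geq 1}\binom{n}{k}f(n,k)\leq 1$. Asserting that the total-mass condition ``is exactly the admissibility normalization of Lemma~\ref{lem:fF}'' begs the question: admissibility constrains the $F$ side, and nothing forces a given weighted semivalue to satisfy it.

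The paper's own proof escapes this by rescaling: given a weighted semivalue $\xi$ with coefficients $f$, it sets $c_n=\sum_k f(n,k)\binom{n}{k}$ and passes to $\overline{f}=f/c_n$, which does correspond to a probability measure. The price is that the resulting $\qstar_F$ equals $\xi/c_n$ rather than $\xi$, so the proposition is best read as a bijection between probability measures on $\{0,\dots,n\}$ and weighted semivalues taken up to positive scalar multiples (equivalently, the normalized representatives with $c_n=1$), with $\mu_n(0)$ absorbing whatever mass is left over. Your argument would be repaired by the same move --- restrict the codomain to normalized weighted semivalues, or state the correspondence modulo scaling --- but as written the surjectivity step fails.
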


\begin{proof}
From Proposition~\ref{prop:potential}, $\qstar_F$ is a weighted semivalue.
Conversely, given a weighted semivalue $\xi$  we define
$\mu_n(k) = \overline{f}(n,k):=f(n,k)/ c_n$ where $c_n:=\sum_k f(n,k)
\binom{n}{k}$. Then defining $F$ by \eqref{eq:fF} applied to
$\overline{f}$ we have $\qstar_F = \xi$. 
\end{proof}

\begin{remark}
Note that a weighted semivalue satisfies the normalization condition for a
semivalue if and only if for each $n$, the mean of $\mu_n$ is exactly
$n$. 

The recursion \eqref{eq:frec} translates to 
$$
\mu_n(k) = \left[ 1 - \frac{k}{n+1} \right] \mu_{n+1}(k) + \frac{k+1}{n+1} \mu_{n+1}(k+1).
$$
\end{remark}

\begin{example}
\label{eg:Banzhaf}
For the Coleman index, the marginal function is given by $f(n,k) = 2^{-n}$. The associated semivalue 
(obtained by dividing by $\Qstar_F(\mathcal{U}_1) = 1/2$) is the Banzhaf value.
\end{example}

\begin{example}
\label{eg:Shapley}
We now consider the Shapley value, given by 
\begin{align*}
\sigma_i(G) & = \sum_{\emptyset \subset S \subseteq X} 
\frac{(n-|S|)!(|S|-1)!}{n!} \left[ v(S) - v(S\setminus\{i\}) \right] \\
& = \sum_{\emptyset \subset S \subseteq X} 
\frac{|S|^{-1}}{\binom{n}{|S|}} \left[ v(S) - v(S\setminus\{i\}) \right] \\
& = \sum_{k=1}^n  \frac{1}{k\binom{n}{k}}  \sum_{S\subseteq X, |S| = k}
\left[ v(S) - v(S\setminus\{i\}) \right]
\end{align*}
It is the semivalue associated to $\qstar_F$ where
$F(n,0) = 1$ and for $k\geq 1$
\begin{equation*}
\label{eq:ssF}
F(n,k)  = \frac{H_n - H_{k-1}}{H_n} :=\frac{\sum_{j=k}^n \frac{1}{j}}{\sum_{j=1}^n \frac{1}{j}}.
\end{equation*}
where $H_n$ denotes as usual the $n$th harmonic number.
\end{example}

We single out the simplest case for special mention. Recall that a
\emph{regular} semivalue is one for which the weights $f(n,k)$ for $1
\leq k \leq n$ are all nonzero.

\begin{proposition}
\label{prop:qstar0}
The formula
\begin{equation}
\label{eq:qstar0}
\frac{\qstar_{0,i}}{\Qstar_0(\mathcal{U}_1)} = \frac{2}{n+1}\sum_{k=0}^n\sum_{S\subseteq X, |S|=k} 
\frac{\left[ v(S) - v(S\setminus\{i\})\right]}{\binom{n}{k}} 
\end{equation}
defines a regular semivalue on $\games$.
\end{proposition}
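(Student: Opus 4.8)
The plan is to recognize the displayed formula as the normalized marginal function attached to the specific admissible rescaling of Example~\ref{eg:simplest}, and then to verify directly the two conditions that, via Proposition~\ref{prop:potential}, upgrade a weighted semivalue on each $\games_n$ to a genuine semivalue on all of $\games$: the per-size normalization \eqref{eq:fnorm} and the coherence recursion \eqref{eq:frec}. Regularity will be a free by-product.

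First I would read off from Example~\ref{eg:simplest} that $\Qstar_0$ corresponds to $F(n,k) = 1 - k/(n+1)$, so by Lemma~\ref{lem:fF} the associated weight is $f(n,k) = [F(n,k) - F(n,k+1)]/\binom{n}{k} = 1/[(n+1)\binom{n}{k}]$. Next I would evaluate the normalizing constant $c_n = \Qstar_0(\mathcal{U}_1)$: the dictatorial game has $|W_k| = \binom{n-1}{k-1}$, and the identity $\binom{n-1}{k-1}/\binom{n}{k} = k/n$ together with $\sum_{k=1}^n k = n(n+1)/2$ gives $\Qstar_0(\mathcal{U}_1) = 1/2$ for every $n$. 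Dividing $\qstar_{0,i}$ by this constant reproduces the stated formula, with normalized weights $\overline{f}(n,k) = 2f(n,k) = 2/[(n+1)\binom{n}{k}]$.

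Since $\overline{f}(n,k) > 0$ for all $1 \le k \le n$, the semivalue is regular by definition, and Proposition~\ref{prop:f semivalue}(i) already gives a weighted semivalue on each $\games_n$. The constancy $c_n = 1/2$ combined with Proposition~\ref{prop:potential}(ii) then makes the normalized object a semivalue on each $\games_n$; equivalently $\sum_{k=1}^n \binom{n-1}{k-1}\,\overline{f}(n,k) = 1$, which is the same $k/n$ evaluation as above. What remains is coherence across different $n$, namely the recursion $f(n,k) = f(n+1,k) + f(n+1,k+1)$ of \eqref{eq:frec}, which by Proposition~\ref{prop:potential}(iii), using again that $c_n$ is independent of $n$, secures the extension to all of $\games$.

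The only genuine computation, and hence the sole obstacle, is this recursion, which reduces to the reciprocal-binomial identity $\tfrac{1}{\binom{n+1}{k}} + \tfrac{1}{\binom{n+1}{k+1}} = \tfrac{n+2}{n+1}\cdot\tfrac{1}{\binom{n}{k}}$. I would prove it by factoring $k!(n-k)!/(n+1)!$ out of both reciprocals on the left and adding the resulting coefficients $(n+1-k) + (k+1) = n+2$. Multiplying through by $1/(n+2)$ then collapses $f(n+1,k) + f(n+1,k+1)$ to $f(n,k)$, completing the verification and certifying that $\qstar_0/\Qstar_0(\mathcal{U}_1)$ is a regular semivalue on $\games$.
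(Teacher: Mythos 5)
Your proposal is correct, and it follows the same skeleton as the paper's proof: identify $F(n,k)=1-k/(n+1)$, hence $f(n,k)=[(n+1)\binom{n}{k}]^{-1}$, and invoke Proposition~\ref{prop:potential} to pass from weighted semivalue to semivalue on $\games$. The one place where you diverge is in establishing that the normalizing constant $c_n=\Qstar_0(\mathcal{U}_1)$ equals $1/2$ independently of $n$: the paper gets this indirectly from self-duality, observing that $F_0$ satisfies \eqref{eq:Fsym} and that the dictatorial game is proper and strong, so Proposition~\ref{prop:formulae} forces $\Qstar_0(\mathcal{U}_1)=1/2$; you instead compute the sum $\sum_k \binom{n-1}{k-1}/\binom{n}{k}=\sum_k k/n$ directly. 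Both work; the paper's route explains \emph{why} the constant is $1/2$ (self-duality), while yours is more elementary and self-contained. You also explicitly verify the coherence recursion $f(n,k)=f(n+1,k)+f(n+1,k+1)$ via the reciprocal-binomial identity, a step the paper's one-line proof leaves entirely implicit; supplying it is a genuine improvement in completeness, and your factorization argument for the identity is correct.
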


\begin{proof}
This follows from Proposition~\ref{prop:potential} and
Proposition~\ref{prop:formulae}, because $F(n,k) = 1 - k/(n+1)$ satisfies the
identity \eqref{eq:Fsym} and so $\qstar_0$ is self-dual.
\end{proof}

\section{Interpretation of the measures}
\label{s:interp}

The collective value $\Qstar_F$ can be easily interpreted as a
decisiveness index on simple games, which gives the probability of
finding a winning coalition when coalitions are sampled first by
choosing size according to $\mu_n$ and then choosing a coalition of that
size uniformly at random. The individual value $\qstar_F$ can be
interpreted in the usual way (not without controversy) as a power index
\cite{Wils2011}. In this section we consider some other interpretations.

\subsection{Coalition formation}
\label{ss:coalform}

Consider the following model of coalition formation  \cite{LaVa2008b}. We assume
that each possible coalition (subset $S$ of $X$) forms with probability $p(S)$, 
and that only one coalition $S$ will form. Consider the following two expectations. 
First, the \emph{ex ante} expected marginal contribution of $i$ to $S$ is
$$
E[D_i(S)] := E[v(S) - v(S\setminus\{i\}) ] 
= \sum_{S:i\in S} p(S) \left(v(S) - v(S\setminus\{i\}) \right).
$$
The \emph{ex interim} expected marginal contribution of $i$ to $S$, conditional on $i\in S$, is
$$
\Phi_i(v,p):= E[D_i(S) \mid S\ni i] = \frac{E[D_i(S)]}{\Pr(S\ni i)}.
$$
Then \cite[Proposition 3]{LaVa2008b} the maps $\Phi_i(\cdot,p)$ are in bijection with the 
set of all probability distributions on $2^X$. Furthermore the map corresponding to $p$ is 
precisely the \emph{weighted weak semivalue} given by
$$
\Phi_i(v,p) = \sum_{S:i\in S} \frac{p(S)}{\sum_{T:i\in T} p(T)} D_i(S).
$$

Note that  $\Phi$ is a weak semivalue if and only if $\sum_{T:i\in T}
p(T)$ does not depend on $i$, and a semivalue if and only, if in
addition, $p(S)$ depends only on $|S|$. Thus for weak semivalues, the ex
ante and ex interim marginal contribution of $i$ to $S$ is the same.

We now apply the above framework to our measures $\qstar_F$.

\begin{proposition}
Let $F$ be an admissible rescaling. Then under the coalition formation model above, 
$\qstar_{F,i}$ gives the ex ante expected contribution of $i$ to $S$, while the associated semivalue gives 
the ex interim expected marginal contribution of $i$ to $S$, conditional on $i\in S$.
\noproof
\end{proposition}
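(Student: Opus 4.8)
The plan is to connect our measure $\qstar_F$ directly to the coalition formation framework by identifying the correct probability distribution $p$ on $2^X$ and then invoking the result already established in the excerpt. Recall from the recalled result of \cite{LaVa2008b} that, for a distribution $p$ on $2^X$, the ex ante expected marginal contribution of $i$ is $E[D_i(S)] = \sum_{S:i\in S} p(S) D_i(S)$, while the ex interim contribution is $\Phi_i(v,p) = \sum_{S:i\in S} \frac{p(S)}{\sum_{T:i\in T} p(T)} D_i(S)$. The comparison target is the formula from Proposition~\ref{prop:potential}, namely $\qstar_{F,i} = \sum_{S:i\in S} f(n,|S|) D_i(S)$.

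First I would choose $p(S) = f(n,|S|)$, where $f$ is related to the admissible rescaling $F$ as in Lemma~\ref{lem:fF}. The admissibility of $F$ guarantees, via Lemma~\ref{lem:fF}, that $f(n,k) \geq 0$ and that $\sum_{k=0}^n f(n,k)\binom{n}{k} = 1$; the latter is exactly the statement that $\sum_{S\subseteq X} p(S) = 1$, so $p$ is a genuine probability distribution on $2^X$. With this choice, the ex ante expected marginal contribution $E[D_i(S)] = \sum_{S:i\in S} f(n,|S|) D_i(S)$ coincides termwise with the expression for $\qstar_{F,i}$ from Proposition~\ref{prop:potential}. This establishes the first assertion directly.

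For the second assertion, I would compute $\Phi_i(v,p)$ under the same choice of $p$. Since $p(S) = f(n,|S|)$ depends only on $|S|$, the normalizing denominator is $\sum_{T:i\in T} p(T) = \sum_{k=1}^n \binom{n-1}{k-1} f(n,k) = c_n$, which is independent of $i$ by symmetry. Hence $\Phi_i(v,p) = \frac{1}{c_n}\sum_{S:i\in S} f(n,|S|) D_i(S) = \qstar_{F,i}/c_n$. By Proposition~\ref{prop:potential}(ii), the normalized quantity $\qstar_F/c_n$ is precisely the associated semivalue (and $c_n = \Qstar_F(\mathcal{U}_1)$), so $\Phi_i(v,p)$ is exactly the associated semivalue evaluated at $i$. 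This gives the second assertion.

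The only subtlety to watch is the bookkeeping in the normalizing constant: verifying that the per-player denominator $\sum_{T:i\in T} f(n,|T|)$ equals the constant $c_n$ appearing in Proposition~\ref{prop:potential}, rather than some other reindexing. This is the step where the remark following Proposition~\ref{prop:bijprobsemi} (that $\Phi$ is a semivalue exactly when $p(S)$ depends only on $|S|$ and the per-player mass is constant) does the real work, since the $|S|$-dependence of $f$ is what makes $\Phi$ land in the class of semivalues at all. Once this identification of $c_n$ with $\sum_{k} \binom{n-1}{k-1} f(n,k)$ is confirmed, both claims reduce to termwise matching of summations and no genuinely hard estimate remains.
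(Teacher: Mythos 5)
Your proposal is correct and is precisely the argument the paper leaves implicit (the proposition is stated with its proof omitted): take $p(S)=f(n,|S|)$, use the normalization $\sum_k f(n,k)\binom{n}{k}=1$ from Lemma~\ref{lem:fF} to see this is a probability distribution, and match the ex ante and ex interim formulas of \cite{LaVa2008b} termwise with $\qstar_{F,i}$ and $\qstar_{F,i}/c_n$, noting $\sum_{T\ni i}p(T)=\sum_k\binom{n-1}{k-1}f(n,k)=c_n=\Qstar_F(\mathcal{U}_{\{1\}})$. The only slip is cosmetic: the ``semivalue iff $p(S)$ depends only on $|S|$'' observation you invoke comes from the discussion immediately preceding the proposition in Section~\ref{ss:coalform}, not from the remark after Proposition~\ref{prop:bijprobsemi}.
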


\begin{remark}
The special case of the Shapley value was discussed in
\cite[Proposition 2]{LaVa2008b}, where essentially the same formula was derived. 
\end{remark}

\subsection{Sequential interpretation}
\label{ss:sequential}

The Shapley value $\sigma$ can be defined sequentially as follows. Given
a game $G=(X,v)$, follow the query process and at each step award $v(S)
- v(S\setminus\{i\})$ to $i$, where $S$ is the set of elements queried
so far and $i$ is the last element queried. The expected value with
respect to the uniform distribution on permutations of $X$ is the
Shapley value $\sigma_i(G)$. For simple games, this means that we award
$1$ point to $i$ whenever $i$ is \emph{pivotal}, and 0 otherwise.

We can generalize this interpretation to measures of the form
$\qstar_F$. We follow the query process, and award $k\mu_n(k)$ to the
pivotal element if the process stops at $k$ queries. For example, for
$\qstar_0$ this means awarding $k/(n+1)$, the fraction of the maximum
possible number of queries made (a possible interpretation is that we
offer a higher price to pivotal elements to reveal themselves as we
repeatedly fail to find them) . This corresponds to the nonsequential
formula for $\qstar_F$ using the analogous computation to that for the
Shapley value above.

\subsection{Another model of semivalues}
\label{ss:another}

The standard model of simple game requires that the empty coalition
never be winning (this is a consequence of the standard assumption that
$v(\emptyset) = 0$ for all TU-games). For reasons of mathematical
elegance, at least, we prefer that the class of simple games be closed
under duality. This then requires that the grand coalition is always
winning, which is a reasonable assumption for weighted voting games.
However for many applications, such as to the study of manipulation, 
empty games naturally arise. Thus, by duality, we should admit
trivial games to the class of simple games. This approach is followed,
for example, by Taylor and Zwicker \cite{TaZw1999}. 

We denote by $\sgames^+(X)$ the class of simple games in this extended sense. 
The analogue for TU-games we denote $\games^+(X)$.

In this new model, the axiomatic definition of semivalue remains the
same, since the efficiency, anonymity, positivity and projection
axioms still make sense. However the characterization of \cite{DNW1981} changes
slightly.

\begin{proposition}
\label{prop:ext sval}
A function $\sval$ is a semivalue on $\games^+(X)$ if and only if it has the form
$$
\sval(i) = \sum_k f(n,k) \sum_{S\subseteq X, |S| = k} 
\left[v(S) - v(S\setminus\{i\})\right]
$$
where 
$f(n,k) \geq 0$ and $f$ satisfies the identity

$$
\sum_k f(n,k) \binom{n}{k} = 1.
$$
Such a function extends to a semivalue on $\games^+$ if and only if $f$ also satisfies the identity
$$
f(n,k) = f(n,k+1) + f(n+1,k+1).
$$
\end{proposition}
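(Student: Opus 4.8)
The plan is to characterize semivalues on the extended class $\games^+(X)$ by adapting the proof of Proposition~\ref{prop:f semivalue} and the characterization of \cite{DNW1981}, tracking carefully how dropping the assumption $v(\emptyset)=0$ alters the normalization and recursion conditions. First I would verify that the \emph{form} of the semivalue is unchanged: the axioms of Linearity, Positivity, Projection and Anonymity still force $\sval$ to be of the form $\sval(i) = \sum_k f(n,k)\sum_{|S|=k}[v(S)-v(S\setminus\{i\})]$ with $f(n,k)\geq 0$. This follows exactly as in the classical case, since these four axioms do not refer to the value $v(\emptyset)$ at all; the only place the standard assumption enters is in computing the normalization implied by efficiency.

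The key step is the revised normalization condition. In the classical setting, efficiency $\sum_i \sval(i) = v(X)$ is tested on unanimity games and yields $\sum_k \binom{n-1}{k-1} f(n,k) = 1$. In the extended model, the natural normalizing game is different. I would compute $\sum_i \sval(i)$ directly by interchanging sums:
\begin{equation*}
\sum_{i\in X} \sval(i) = \sum_k f(n,k) \sum_{|S|=k} \sum_{i\in S}\left[v(S)-v(S\setminus\{i\})\right].
\end{equation*}
Testing this against the game $v\equiv 1$ on all coalitions including $\emptyset$ (the constant game now legitimately in $\games^+$), every marginal $v(S)-v(S\setminus\{i\})$ vanishes except when $|S\setminus\{i\}|$ drops below a threshold — but since constants contribute zero marginals, I instead test on the indicator game $v(S)=1$ for $S=X$ only, or more cleanly on the full additive game, to extract the condition $\sum_k f(n,k)\binom{n}{k} = 1$. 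The factor $\binom{n}{k}$ rather than $\binom{n-1}{k-1}$ is exactly the signature of admitting $\emptyset$ as a potentially winning coalition, and matches the admissibility condition of Lemma~\ref{lem:fF}. I would confirm this by noting that efficiency on $\games^+$ must hold for the constant game $v\equiv 1$, whose total allocated value should be $v(X)=1$, forcing the new normalization.

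For the extension part, I would impose consistency across $n$: a semivalue on all of $\games^+$ must restrict compatibly when a null player is added. Mirroring the derivation of \eqref{eq:frec} in \cite{DNW1981}, I would take a game on $X_n$, embed it into $X_{n+1}$ by adjoining a dummy player, and demand that the value on the original players is unchanged. Expanding $\sum_{|S|=k}$ over subsets of $X_{n+1}$ and splitting according to whether the new player $n+1$ lies in $S$ produces a linear relation among $f(n,k)$, $f(n+1,k)$ and $f(n+1,k+1)$. Because the empty coalition now carries a marginal contribution (the dummy player's presence shifts the combinatorics at the bottom of the lattice), the resulting identity is $f(n,k) = f(n,k+1) + f(n+1,k+1)$ rather than the classical \eqref{eq:frec}. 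The main obstacle will be bookkeeping the boundary terms precisely: one must check the cases $k=0$ and $k=n$ separately to ensure that admitting $\emptyset$ does not introduce a spurious term, and to confirm that the shifted recursion is not merely necessary but also sufficient for a coherent extension. I would close by verifying sufficiency: given $f$ satisfying both the normalization $\sum_k f(n,k)\binom{n}{k}=1$ and the recursion, the induced family of allocations is consistent and each satisfies all four axioms, hence defines a semivalue on $\games^+$.
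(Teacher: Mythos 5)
Your opening step---that Linearity, Positivity, Projection and Anonymity force the form $\sval(i)=\sum_k f(n,k)\sum_{|S|=k}[v(S)-v(S\setminus\{i\})]$ with $f\geq 0$---is sound and is essentially all the paper does: its proof is a one-line observation that the Dubey--Neyman--Weber argument goes through verbatim, the only change being that the unanimity-game basis now contains the extra element $\mathcal{U}_\emptyset$, so the space of games has dimension $2^n$ rather than $2^n-1$ and the normalization is recalibrated accordingly. The genuine gap is in your derivation of that new normalization. You derive it from \emph{efficiency}, but efficiency is not among the semivalue axioms (the paper's own list is Linearity, Positivity, Projection, Anonymity, and imposing efficiency would leave only the Shapley value). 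Worse, the computation you sketch does not produce the stated condition: the constant game $v\equiv 1$ has every marginal $v(S)-v(S\setminus\{i\})$ equal to zero, so ``efficiency'' for it reads $0=1$ and fails for every $f$; the unanimity game $\mathcal{U}_X$ yields $nf(n,n)=1$; and the additive game yields the \emph{old} normalization $\sum_k\binom{n-1}{k-1}f(n,k)=1$. At no point do you actually obtain $\sum_k\binom{n}{k}f(n,k)=1$; you assert it. The change has to come from the enlarged basis interacting with the Projection/normalization axiom on the enlarged class of additive games, which is what the paper's proof (tersely) points to.

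The second gap is the recursion. The null-player consistency computation you describe---embed a game on $X_n$ into $X_{n+1}$ and split each $S\subseteq X_{n+1}$ according to whether it contains the new player---is completely insensitive to whether $\emptyset$ may be winning, because the empty coalition never carries a marginal contribution (the term $v(S)-v(S\setminus\{i\})$ requires $i\in S$). Carried out honestly, that computation returns the classical recursion \eqref{eq:frec}, not the displayed identity $f(n,k)=f(n,k+1)+f(n+1,k+1)$. Indeed the weights $f(n,k)=[(n+1)\binom{n}{k}]^{-1}$ of $\qstar_0$, which the paper asserts extend to a semivalue on all of $\games^+$, satisfy \eqref{eq:frec} but violate the displayed identity (take $n=1$, $k=0$: $\tfrac12\neq\tfrac12+\tfrac16$). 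So either the displayed recursion is a misprint for \eqref{eq:frec} or a genuinely different coherence notion is intended; in either case your proposed mechanism (``the empty coalition now carries a marginal contribution, shifting the combinatorics'') is not a valid derivation of it, and the boundary-case bookkeeping you defer is exactly where the argument would collapse.
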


\begin{proof}
The proof of \cite{DNW1981} still works, with the only change being that the normalization 
condition is slightly different. This is because the vector space of all games now includes the 
empty game in the standard basis of unanimity games and so has dimension one more than 
before.
\end{proof}

\begin{corollary}
Let $F$ be admissible. Then $\qstar_F$ is a semivalue on $\games^+(X)$.
\end{corollary}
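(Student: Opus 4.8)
The plan is to check, one at a time, the three conditions in the characterization of Proposition~\ref{prop:ext sval}, and to observe that the extended-class normalization appearing there is \emph{identical} to the admissibility condition once it is translated through Lemma~\ref{lem:fF}. This coincidence is the entire content of the corollary: on $\games^+(X)$ the semivalue normalization and the admissibility normalization are the same requirement.

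First I would record that, by Proposition~\ref{prop:potential}(i), the marginal function $\qstar_F$ has the explicit form
$$
\qstar_{F,i}(v) = \sum_{k=0}^n f(n,k) \sum_{S\subseteq X,\,|S|=k} \left[v(S) - v(S\setminus\{i\})\right],
$$
with $f$ determined by $F$ as in Lemma~\ref{lem:fF}. This is exactly the functional form demanded by Proposition~\ref{prop:ext sval}, so that hypothesis holds by definition. Next I would invoke the concluding assertion of Lemma~\ref{lem:fF}: $F$ is admissible if and only if $f(n,k)\geq 0$ for all $n,k$ and $\sum_{k=0}^n f(n,k)\binom{n}{k} = 1$. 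Since $F$ is assumed admissible, both the positivity $f(n,k)\geq 0$ and the normalization $\sum_k f(n,k)\binom{n}{k}=1$ hold immediately, and these are precisely the two remaining conditions of Proposition~\ref{prop:ext sval}. Hence $\qstar_F$ is a semivalue on $\games^+(X)$.

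I do not anticipate any genuine obstacle; the work is purely a matter of matching hypotheses. The one point worth stressing is the contrast with the standard class $\games(X)$, where the semivalue normalization is \eqref{eq:fnorm}, namely $\sum_k \binom{n-1}{k-1} f(n,k) = 1$. That condition is in general different from the admissibility normalization, which is exactly why on $\games(X)$ the raw $\qstar_F$ is only a \emph{weighted} semivalue and must be rescaled by the factor $c_n$ of Proposition~\ref{prop:f semivalue}. Passing to $\games^+(X)$ enlarges the underlying space of games by one dimension (the extra unanimity direction $\mathcal{U}_\emptyset$, in which every coalition wins), and the adapted Dubey--Neyman--Weber argument then yields the normalization $\sum_k \binom{n}{k} f(n,k) = 1$; this is precisely the admissibility condition, so every admissible $\qstar_F$ is a semivalue outright, with no rescaling needed. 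Since we work over a single fixed player set $X$, no coherence recursion across different values of $n$ enters the argument.
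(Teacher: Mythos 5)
Your proof is correct and is exactly the argument the paper intends (the paper leaves the corollary without proof precisely because it is this immediate matching of hypotheses): Proposition~\ref{prop:potential}(i) gives $\qstar_F$ the required functional form, and Lemma~\ref{lem:fF} identifies admissibility of $F$ with the positivity and normalization conditions of Proposition~\ref{prop:ext sval}. Your closing observation---that the $\games^+(X)$ normalization $\sum_k \binom{n}{k} f(n,k)=1$ coincides with admissibility, whereas the $\games(X)$ normalization \eqref{eq:fnorm} does not, which is why no rescaling by $c_n$ is needed here---is also accurate and matches the paper's surrounding discussion.
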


\begin{remark}
For example, $\qstar_0$ is the exact analogue of the Shapley value in
this new context. The Shapley value is the semivalue that has equal
weight on all coalition sizes from $1$ to $n$, hence the formula $f(n,k)
= [n \binom{n-1}{k-1}]^{-1}$, whereas $\qstar_0$ has equal weight on all
coalition sizes from $0$ to $n$.
\end{remark}

\section{The measures $\Qstar_0$ and $\qstar_0$}
\label{s:Q0}

We recall that $\Qstar_0(G)$ is the probability that a randomly sampled
coalition is winning, when a coalition size is chosen uniformly and
condition on that a particular coalition is chosen. Alternatively, it
gives the expected fraction of the maximum possible number of queries
saved when players are sampled uniformly at random without replacement
until a winning coalition has been found.

\begin{example}
\label{eg:size 4}
We present the values of $\Qstar_0$ for all simple games with $4$
players (up to isomorphism). Because of anonymity, we may call the
players $1,2,3,4$ and we define each game by listing its minimal winning
coalitions in the obvious way (the last line corresponds to the empty
coalition). Table~\ref{t:n=4} is the analogue of the table in
\cite{Carr2005} for the Coleman index $C$, which we also list for
comparison. We have reordered some rows from the table in \cite{Carr2005}, 
by permuting some rows with equal values of $C$. These rows are marked with $^*$.

Note that the values of $C$ always (weakly) decrease going down the
column, as do those of $\Qstar_0$. Thus $C$ and $\Qstar_0$ never
disagree on the relative decisiveness of two games when they both agree
that two games are not equally decisive. Even though the range of the
values of $C$ (excluding the empty game) is much larger than the range
of values of $\Qstar_0$, the latter never has equal values on two games
when the former does not, but the former sometimes has equal values when
the latter does not. Thus $\Qstar_0$ appears to discriminate better
between games.

\begin{table}
\begin{tabular}{|l|l|l|}
\hline
$W^m$ & $C$ & $\Qstar_0$ \\ \hline
$1;2;3;4$ &$0.9375$  & $0.8000$ \\ \hline
$1;2;3$ &$0.8750$ & $0.7500$\\ \hline
$1;2;34$ &$0.8125$ & $0.7000$ \\ \hline
$1;2$ &$0.7500$ & $0.6667$ \\ \hline
$1;23;24;34$ &$0.7500$ & $0.6500$ \\ \hline
$1;23;24$ &$0.6875^*$ &$0.6067$  \\ \hline
$12;13;14;23;24;34$ &$0.6875^*$& $0.6000$  \\ \hline
$12;13;14;23;24$ &$0.6250$ &$0.5833$  \\ \hline
$1;23$ & $0.6250$& $0.5500$ \\ \hline
$1;234$ &$0.5625^*$ &   $0.5500$ \\\hline
$12;13;14;23$ &$0.5625^*$ &$0.5333$  \\ \hline
$12;13;24;34$ &$0.5625^*$  &$0.5333$  \\ \hline
$1$ & $0.5000$ &$0.5000$  \\ \hline
$12;13;23$ & $0.5000$& $0.5000$ \\ \hline
$12;13;24$ & $0.5000$&  $0.5000$\\ \hline
$12;13;14;234$ & $0.5000$&$0.5000$  \\ \hline
$12;34$ & $0.4375^*$&  $0.4667$\\ \hline
$12;13;234$ & $0.4375^*$&$0.4667$  \\ \hline
$12;13;14$ &$0.4375^*$ &  $0.4500$\\ \hline
$12;13$ & $0.3750$&  $0.4167$\\ \hline
$12;134;234$ &$0.3750$ &$0.4033$  \\ \hline
$123;124;134;234$ &$0.3125^*$ &$0.4000$  \\ \hline
$12;134$ &$0.3125^*$ &  $0.3833$\\ \hline
$123;124;134$ &$0.2500$ &  $0.3500$\\ \hline
$12$ &$0.2500$ &  $0.3333$\\ \hline
$123;124$ &$0.1875$ & $0.3000$ \\ \hline
$123$ &$0.1250$ &  $0.2500$\\ \hline
$1234$ &$0.0625$ &  $0.2000$\\ \hline
 & $0.0000$ &$0.0000$  \\ \hline
\end{tabular}
\caption{Values of $\Qstar_0$ and $q_0$ for simple games with $4$ players.}
\label{t:n=4}
\end{table}

\end{example}

The next type of example was our original motivation for the study of
$\Qstar_0$. The quantity $\Qbar$ seems to us a compelling way to measure
the effort taken by an external agent to change the outcome of the
election via manipulation. Assuming that the voting situation is known
but not the complete profile (we may know from polling data how many
voters of each type there are, but not their identity), the agent
incurs a unit cost to determine each voter's type. The sequential
model occurs naturally here.

\begin{example}
\label{eg:manip}
Consider a 3-candidate election where voters submit complete and total
preference orders. Suppose that the voting situation is as follows: 2
voters have preference order $abc$, 1 has $bac$ and another has $cba$. A
manipulation is a change of vote by some subset that causes a preferred
outcome for those voters, assuming the other voters continue to vote
sincerely. We define a winning coalition to be a subset containing a
subset that can manipulate coalitionally. Here for concreteness we break
ties uniformly at random, and assume risk-averse manipulators (see
\cite{PrWi2007} for more details). 

Using a positional scoring rule that awards $1, \alpha, 0$ points to the
first, second, third ranked candidates, we see that the scores of $a, b,
c$ respectively are $2+\alpha, 1+3\alpha, 1$. 

There is no manipulating coalition which can make $c$ win, since the last voter cannot 
help $c$ overtake both $a$ and $b$, while the other voters have no incentive to do so.

If $\alpha \leq 1/2$ then $a$ wins (solely, unless $\alpha = 1/2$ in
which case $a$ and $b$ tie). The $cba$ voter can change to $bca$, and
this allows $b$ to win, so is preferred by that voter. Thus a
manipulating coalition of size $1$ exists. Furthermore, if $\alpha \geq
1/3$, then the $bac$ voter also has the power to make $b$ win by voting
$bca$. It follows that for $1/3 \leq \alpha \leq 1/2$, the winning
coalitions are those containing either of the last two voters, while for
$0\leq \alpha < 1/3$, the winning coalitions are those containing the
last voter. In the former case, a winning coalition is found by the
random query process after $3$ queries for $4$ of the $24$ possible query
sequences, after $2$ queries for $8$ query sequences and after $1$ query for
$12$ query sequences. Thus $\Qbar = 40/24$ and $\Qstar_0 = 1 - \Qbar/5 =
2/3$. Similarly, in the latter case the relevant voter is found after
$1$,$2$,$3$, or $4$ queries in each of $6$ query sequences, leading to  $\Qbar =
60/24$ and $\Qstar_0 = 1/2$.

If $\alpha > 1/2$ then $b$ is the sole winner. Either of the $abc$
voters can make $a$ win by switching to $acb$. In this case we have by
the same argument as above that $\Qstar_0 = 2/3$. As described in detail
above, these values of $\Qstar_0$ give a measure of the probability of
finding a manipulating coalition when coalitions are sampled according
to a particular probability distribution.

In the case $0\leq \alpha < 1/3$, the individual measure $\qstar_0$ has
the value $0$ for all but the last voter, and $1/2$ for that voter. When
$1/3 \leq \alpha \leq 1/2$, the values are $0$ for the first two voters,
and $2/15$ for each of the last two, whereas when $1/2 < \alpha$, the
roles of the first pair and last pair are reversed. These numbers
represent the ex ante probability of being critical in a winning
coalition, if coalitions form according to the particular probabilistic
model used here. 

\end{example}

\subsection{A bargaining model}
\label{ss:bargain}

Laruelle and Valenciano \cite{LaVa2008c} present a model of bargaining
intended to help give a noncooperative foundation to the theory of power
indices and values. They discuss a setup where a proposer suggests an
initial allocation of payoffs to a winning coalition containing the
proposer. Let $p$ be a map that for each set $X$ of players, takes each
simple game on $X$ to a probability distribution over $X\times 2^X$. The
idea is that $p_G(i,S)$ is the probability that $i$ will be the proposer
with the support of $S$. They impose a dummy axiom which leads to the
condition that $p_G(i,S) = 0$ unless $i$ swings $S$. Anonymity is also a
reasonable assumption.

They discuss nonsequential (``first choose $S$, then $i$") and
sequential (``choose $i$ and $S$ simultaneously") approaches. In the
former case, given a probability distribution over coalitions where the
probability of $S$ depends only on $|S|$, in the first round we choose
$S$ and then choose choosing $i\in S$ uniformly at random. If $i$ swings
$S$, then $i$ is the proposer, otherwise we repeat rounds until a
proposer is found). Thus we are in the arena of weighted semivalues and
we write $p(n,s)$ for $p(S)$ when $|X| = n$.

\begin{proposition}
\label{prop:Qstarbarg}
Let $F$ be admissible and consider the nonsequential protocol above, where $S$ is chosen according
to the probability given by $\mu_F$. Let $\pi_i$ be the probability that $i$ is eventually chosen as 
the proposer, and let $r_i$ be the probability that  $i$ is chosen as proposer in the first step. 
Then there is $c$ so that $\pi_i = c r_i$ for all $i$.

In particular, for $F=F_0$, $\pi_i$ equals the Shapley-Shubik index of $i$.
\end{proposition}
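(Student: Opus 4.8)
The plan is to analyze the nonsequential bargaining protocol directly, computing $\pi_i$ and $r_i$ as explicit sums and showing they are proportional with a constant $c$ independent of $i$. First I would fix notation: in each round we choose a coalition $S$ with probability $\mu_F(S)$ depending only on $|S|=s$, say $\mu_F(S) = f(n,s)\binom{n}{s}/c_n$ suitably normalized, then pick $i\in S$ uniformly. Player $i$ becomes proposer in that round precisely when $i\in S$ and $i$ swings $S$ (i.e. $v(S)-v(S\setminus\{i\})=1$ for the simple game, equivalently $i$ is critical in $S$). Thus the first-step probability is
\begin{align*}
r_i = \sum_{S:\, i\in S,\ i\text{ swings }S} \mu_F(S)\cdot\frac{1}{|S|},
\end{align*}
which after collecting by size is a weighted sum over sizes $k$ of the number of coalitions that $i$ swings, with weights $\mu_n(k)/k$. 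The key observation is that this is exactly (a scalar multiple of) the weighted semivalue $\qstar_{F,i}$ from Proposition~\ref{prop:potential}, since $\sum_{S:i\in S} f(n,|S|)D_i(S)$ has the same form with $f(n,k)$ in place of $\mu_n(k)/k$ up to the normalization relating $f$ and $\mu$.

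Next I would handle the ``eventually chosen'' probability $\pi_i$. Because the protocol repeats independent rounds until some proposer is found, $\pi_i$ is the probability that $i$ is the first successful proposer across all rounds. By the standard geometric argument for repeated independent trials, the conditional distribution of which player succeeds, given that somebody succeeds, equals the first-round success distribution renormalized: if $\rho = \sum_j r_j$ is the per-round probability that \emph{some} proposer is found, then $\pi_i = r_i/\rho$. This immediately gives $\pi_i = c\,r_i$ with $c = 1/\rho$, and crucially $c$ does not depend on $i$ — which is exactly the claimed proportionality. The only subtlety is confirming that $\rho>0$ (so the process terminates almost surely), which holds whenever the game is nonempty, since then the grand coalition is winning and contains a swing player, giving positive first-round success probability.

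For the final sentence, I would specialize to $F=F_0$, the admissible rescaling giving the Shapley value (Example~\ref{eg:Shapley}), where $F_0(n,k)=(H_n-H_{k-1})/H_n$. In that case the weighted semivalue $\qstar_{F_0,i}$ normalizes to the Shapley value, and the size-weights $\mu_n(k)/k$ are arranged so that $r_i$ is proportional to $\sum_{S:i\in S}\frac{(n-|S|)!(|S|-1)!}{n!}D_i(S)$, the Shapley value of $i$. Since $\pi_i=cr_i$ and $\sum_i\pi_i=1$ (some proposer is eventually found), the constant $c$ is forced to be the reciprocal of $\sum_i r_i$, and the normalized $\pi_i$ coincides with the Shapley-Shubik index. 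The main obstacle I anticipate is purely bookkeeping: matching the size-dependent weights $\mu_n(k)/k$ arising from the uniform choice of $i\in S$ against the weights $f(n,k)$ defining $\qstar_F$, and verifying that the Shapley normalization $f(n,k)=[n\binom{n-1}{k-1}]^{-1}$ emerges correctly under $F=F_0$; once that identification is made, the proportionality $\pi_i=cr_i$ is an immediate consequence of the renewal structure of the repeated protocol.
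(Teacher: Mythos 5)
Your overall architecture matches the paper's: you compute the first-round probability $r_i$ as a size-weighted sum of the coalitions that $i$ swings, and you obtain $\pi_i = r_i/\sum_j r_j$ from the renewal structure of the repeated rounds (the paper simply asserts this renormalization; your geometric-trials justification is a welcome addition, as is the observation that termination requires the game to be nonempty). After collecting by size, $r_i = \sum_k \frac{f(n,k)}{k}\sum_{|S|=k} D_i(S)$, exactly as in the paper. Do be careful with the bookkeeping you flag: the probability of a \emph{specific} coalition $S$ of size $k$ is $p(S) = \mu_n(k)/\binom{n}{k} = f(n,k)$, already normalized since $\sum_k \mu_n(k) = F(n,0) = 1$; your expression $\mu_F(S) = f(n,s)\binom{n}{s}/c_n$ conflates the size distribution with the coalition distribution, and the per-coalition weight in $r_i$ is $f(n,k)/k$, not $\mu_n(k)/k$.

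The genuine gap is in the final sentence. You identify $F_0$ with the harmonic rescaling of Example~\ref{eg:Shapley}, $F(n,k) = (H_n - H_{k-1})/H_n$, i.e.\ the rescaling whose \emph{associated semivalue} $\qstar_F$ is the Shapley value. But in this paper the subscript $0$ refers throughout to the affine rescaling $F_0(n,k) = 1 - k/(n+1)$ of Example~\ref{eg:simplest}, for which $\mu_n$ is uniform and $f(n,k) = \bigl[(n+1)\binom{n}{k}\bigr]^{-1}$. This is not a mere notational slip: with your choice the per-coalition weight would be $f(n,k)/k = \bigl[k^2 H_n \binom{n}{k}\bigr]^{-1}$, which is \emph{not} proportional in $k$ to the Shapley coefficient $\bigl[k\binom{n}{k}\bigr]^{-1}$, so $r_i$ would not be proportional to $\sigma_i$ and the conclusion would fail. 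The point of the proposition --- and the reason it is worth recording --- is that the extra factor $1/|S|$ coming from the uniform choice of proposer within $S$ converts the \emph{uniform} size distribution into the Shapley weights: for the affine $F_0$ one gets $f(n,k)/k = (n+1)^{-1}\bigl[k\binom{n}{k}\bigr]^{-1}$, hence $r_i = \sigma_i/(n+1)$ and $\pi_i = \sigma_i$. Your claim that ``the size-weights are arranged so that $r_i$ is proportional to the Shapley value'' is asserted rather than verified, and under your identification of $F_0$ it is false.
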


\begin{proof} 
Let $r_i$ denote the probability that  $i$ is chosen as proposer in the first step. Then  
$\pi_i = r_i/\sum_i r_i$. Now 
\begin{align*}
r_i & = \sum_{S:S\in W, i\in S, S\setminus\{i\} \not\in W} \frac{p(S)}{|S|} \\
& = \sum_{S\neq \emptyset} p(S) \frac{D_i(S)}{|S|}\\ 
& = \sum_{k=1}^n \frac{f(n,k)}{k} \sum_{|S|=k}  D_i(S)\\
\end{align*}

When $F=F_0$, $f(n,k)/k$ is precisely $[n+1]^{-1}$ times the coefficient
found in the Shapley-Shubik index, namely $[k\binom{n}{k}]^{-1}$. Thus
$r_i$ equals $[n+1]^{-1}$ times the Shapley-Shubik index $\sigma_i$ of
$i$. Thus since $\sum_i r_i$ is independent of $i$ (in fact it equals
$[n+1]^{-1}$), the ratios $\pi_i/r_i$ are constant, and so the
normalized probability $\pi_i$ equals $\sigma_i$.
\end{proof}

\begin{remark}
This result was stated (in other words) without proof in \cite[p. 124]{LaVa2008c}.
\end{remark}

In the sequential scenario, the query process seems a very natural one. If
the pivotal voter is always chosen as the proposer, then the probability
of being the proposer is again the Shapley-Shubik index. However other
choices are possible. For example, for each admissible $F$, if we weight
the pivotal voter by $k\mu_n(k)$ every time it is pivotal in position
$k$, and then compute the overall probability accordingly, we obtain the
normalized version of $\qstar_F$.

\subsection*{Acknowledgement}
We thank the anonymous referee for several insightful comments that
helped to improve the presentation of this paper.

\bibliographystyle{plain}
\bibliography{power}
\end{document}